\def\joo#1{\cdot^{\cdot^{\cdot^{#1}}}}
\theoremstyle{plain}
\newtheorem{theorem}{Theorem}[section]
\newtheorem{proposition}[theorem]{Proposition}
\theoremstyle{definition}
\title{The power of clockings
}
\author{
Antti Kuusisto\\
Tampere University, University of Helsinki 
}
\date{}
\begin{document}

\maketitle

\vspace{0.5cm}

\begin{abstract}
\noindent
We investigate the expressive power of a Turing-complete logic
based on game-theoretic semantics. By defining suitable fragments and
variants of the logic, we obtain a range of natural characterizations for some
fundamental families of model classes.
\end{abstract}

\vspace{1.5cm}

\section{Introduction}

We investigate the expressive power of the Turing-complete logic
defined originally in \cite{turingcomp}. The logic is based on
game-theoretic semantics and has constructors for looping and 
modification of relations. The logic is a particularly natural 
extension of standard first-order logic $\mathrm{FO}$.

In particular, we make use of fragments and variants of the 
Turing-complete logic $\mathrm{T}$, see the
preliminaries for the specification.
The point is to 
give characterizations for some of the higher complexity 
classes. In addition to using the features already available in $\mathrm{T}$, we
make use of \emph{clocking} terms. These are syntactic operators that can be 
used to limit the time and space required in semantic games.

There exist several works that make use of clockings in the literature.
Much of the work in \cite{atl}
and \cite{atltwo} and \cite{gan}, including the background motivations for those
studies, relates to investigating the properities of $\mathrm{T}$ with clockings.
The paper \cite{atl} provides a game-theoretic semantics for $\mathrm{ATL}$ with clock values declared on the fly. The paper \cite{atltwo} has similar motivations, but investigates the more involved case of $\mathrm{ATL}^+$. The article \cite{gan}
concentrates on the $\mu$-calculus. The clockings are part of the semantics and help control the semantic game. Also non-standard clockings (i.e., ones that lead to semantic variants of the original logics) are studied with the aim of
defining interesting variants of the original systems.
Defining logics with different kinds of bounding constructors has of course been done also elsewhere, with different kinds of motivations. For some interesting examples, see, e.g., \cite{alur} and \cite{prompt}.

The clocking terms in this article are syntactic arithmetic
expressions (e.g., $2^n$) that directly limit the
duration of (or space used in) the semantic 
game. For example, if an operator is coupled with $2^n$, this means that it
should not be encountered in the
semantic game for more than $2^n$ times, where $n$ is
the size of the domain of the input model (that is, the model from
where the semantic game begins). By 
such natural and simple additions to the syntax of $\mathrm{T}$, we provide characterizations for $k$-fold exponential time and space classes for all $k$.
These are capture results in the usual sense of descriptive complexity \cite{libkin},
\cite{immerman}.

\section{Preliminaries}

We denote models by $\mathfrak{A}$, $\mathfrak{B}$, $\mathfrak{M}$, et cetera.
The corresponding domain is denoted by $A$, $B$, $M$, et cetera.
Models are assumed finite with a finite relational vocabulary. We note that we
often use the same symbol $R$ to
denote both a relation $R^{\mathfrak{M}}$ 
and the underlying relation symbol $R$. This is for the sake of simplicity.
Now, suppose we
have a linear order $<$ over the domain $M$ of a model $\mathfrak{M}$.
Suppose we have also ordered the set of
relation symbols in the vocabulary of $\mathfrak{M}$.
Then we let $\mathit{enc}(\mathfrak{M})$ denote the binary 
encoding of $\mathfrak{M}$ as defined in \cite{libkin} (please see
the full details there).
Basically, the encoding $\mathit{enc}(\mathfrak{M})$
first lists the bit $1$ for $|M|$ times,
followed by $0$. Then the relations are encoded so
that the relation encodings become concatenated one at a
time in the order that the ordering of the vocabulary requires. A $k$-ary
relation is encoded as a binary string $s$ of length $M^{k}$ 
with bit $1$ at the position $j$ of $s$ indicating
that the $j$th tuple of $M^k$ (with respect to
the lexicographic order given by $<$) is in the relation.

We assume that all relational vocabularies always have a canonical order
associated with them, so we will only have to make sure a suitable
linear is present when defining encodings of models. We note that, in the
elaborations below, we
shall \emph{in fact mainly use a successor relation} (of
the linear order) rather than a linear order itself. This obviously does not
change the encoding and is not a matter of substance any way.

In this article we study the Turing-complete logic 
defined in \cite{turingcomp}. The syntax and 
semantics of the logic is carefully specified in that article, so we
shall not reiterate all the formalities here. The main point is to add
two classes of constructs to first-order logic $\mathrm{FO}$, namely, operators
that allow to modify the underlying model and constructs that
enable looping in semantic games when evaluating formulae.
The operators that modify the model are as follows.
\begin{enumerate}
\item
The operator $Ix$ adds a new point into
the model domain and names it with the variable $x$.
So, when encountering a formula $Ix\varphi$ in a
semantic game, the following happens.
\begin{enumerate}
\item
The model is extended with a new element (keeping
all relations as they are).
\item
The new element is called $x$, i.e., the current
assignment function $g$ is modified so that it
sends the variable symbol $x$ to the new element.
\item
The game is continued from $\varphi$.
\end{enumerate}
\item
The operator $I(R(x_1,\dots , x_k))$ 
adds a tuple to the $k$-ary relation $R$ and
lets $x_1,\dots , x_k$ denote the elements of that tuple.
That is, the assignment $g$ is modified such that $x_1,\dots, x_k$ map to the 
elements of the new tuple. When encountering a formula $I(R(x_1,\dots , x_k))\varphi$,
the verifying player first does the addition and then the game 
continues from $\varphi$. Note that this step does not involve adding any new
domain points to the model. Note also that $R$ is not necessarily genuinely extended:
the verifier picks a tuple $r$ from the
model and then $R$ is updated to $R \cup \{r\}$. This may of course leave $R$ as it was, which happens if we already had $r\in R$. 
\item
The operator $D(R(x_1,\dots , x_k))$ 
deletes a tuple $(m_1,\dots , m_k)$ from the $k$-ary relation $R$ and
then the deleted tuple $(m_1,\dots , m_k)$ is marked by the
tuple $(x_1,\dots , x_k)$ of variables. That is, the
assignment $g$ is modified such that $x_1,\dots, x_k$ map to the 
elements of the new tuple.
Therefore, when encountering a formula $D(R(x_1,\dots , x_k))\varphi$,
the verifying player first does the modifications and then the game 
continues from $\varphi$. We note that the verifier is not required to
actually delete a tuple from $R$, instead the verifier appoints a 
tuple $(m_1,\dots , m_k)$ and then this tuple is
deleted from $R$ if $R$ has that tuple. Otherwise $R$ stays as it is.
Note that the domain does not become modified.
\item
The looping operator $C$ allows self-reference, i.e., formulae can
refer to themselves. The operator acts as an atomic formula as well as a
labelling operator. A formula $C\varphi$ has the \emph{label 
symbol} $C$ in front of it. Intuitively, the symbol $C$
names $\varphi$ to be the formula called ``$C$''. In a semantic game, 
from positions with $C\varphi$, we simply move on to the position with $\varphi$.
Now, $C$ can also be an atomic formula inside $\varphi$. When encountering the
atom $C$, the game jumps back to the position $C\varphi$.
When $C$ is an atom, we may refer to it as a \emph{looping atom}. For example, in the
formula $\varphi(x)\, :=\, C(Px \vee \exists y(Rxy 
\wedge \exists x(y = x \wedge C)))$ the first occurrence of $C$ is a
label symbol and the second one a looping atom. We note that non-looping-atoms are
also called \emph{first-order atoms}.
\end{enumerate}

The logic also enables the use of \emph{tape predicates}.
These are ordinary relation symbols, with the difference that
they are not included in the vocabulary of the models 
under investigation. Thus they can also be regarded as relation 
variables. The interpretation of each tape predicate is the empty 
relation in the beginning of the semantic game. During the game, alsos tape
predicates $X$ can of course be modified by the operators $D(X(x_1,\dots , x_k))$
and $I(X(x_1,\dots , x_k))$.

Now, why are the predicates $X$ that are not part of
the input signature 
called \emph{tape predicates}? The analogy
with tape symbols here is that tape predicates are
auxiliary objects used in the semantic game rather
than relations in the underlying vocabulary of
the models considered.

The arity of a relation (or the relation symbol) $R$ is
denoted by $\mathit{ar}(R)$. The same convention
holds for tape predicate symbols and the related relations.

In our logic, it is also possible to define a deletion operator $Dx$
that removes a point already labelled by $x$. Then the
assignment is modified accordingly, see
\cite{gamesandcomputation}. Whether or not we include 
this operator in our base logic does not affect the 
results below. They are the same.

The semantic games end in a position 
with a first-order atom (e.g., $R(x_1,\dots , x_k)$ or $x=y$). The verifier
wins the play if the atom holds, and otherwise the falsifier wins. If some
required move cannot be made, the game ends with neither 
player winning. For example, if a
position with a looping atom $C$ is encountered and there does not
exist a corresponding label symbol $C$ in the formula, then the game
ends with neither of the players winning. As another example, in a
first-order atom that has a variable that does not 
appear in the domain of the current assignment, the game ends
with neither player winning.\footnote{However, in the empty 
model, if a position
with $\exists x$, $I(R(x_1,\dots , x_k))$ or $D(R(x_1,\dots , x_k))$ is
encountered, then the game ends and the current verifier loses. This is
because we cannot label anything with the variable symbols.
Yet further, in the empty model, $I(R)$ and $D(R)$ for a nullary 
symbol $R$ are fine and can be performed: we can add or remove
the empty tuple  when considering a nullary relation. So there the game will  
continue. However, there is no genuinely interesting
reason to consider the empty model from the point of view of this article.}

The semantic game is played between Eloise and Abelard.
Eloise begins as the verifier (and negation changes the role of
Eloise from verifier to falsifier and vice versa).
We write $\mathfrak{M},g\models \varphi$ and consider $\varphi$ true\footnote{It may be more intuitive to consider $\varphi$ \emph{verifiable} rather than true. Technically this makes no difference.} in $\varphi$ if Eloise has a
winning strategy in the game involving $\mathfrak{M},g$ and $\varphi$. 
We may write $\mathfrak{M}\models\varphi$ if $g$ is the empty assignment or
otherwise irrelevant. We note that winning strategies are assumed 
positional. However, this makes no difference due to the positional
determinacy of reachability games (which holds
even on infinite models) \cite{paritygames}.

Recall the formula \[\varphi(x)\, :=\, C(Px \vee \exists y(Rxy 
\wedge \exists x(y = x \wedge C)))\] from
above. We have $\mathfrak{M}, \{(x,m)\} \models \varphi(x)$ iff we can 
reach from $m$ an element satisfying $P$. Note that we can make 
the semantic game always terminate in finite models by
considering instead the formula $C(Px \vee \exists y(Rxy \wedge x\not= y 
\wedge Dx\exists x(y = x \wedge C)))$.

We call the so defined logic $\mathrm{T}$. Formally, we let $\mathrm{T}$ be
the logic $\mathcal{L}$ as defined in \cite{turingcomp}. 
As discussed above, the following rules hold.
\begin{enumerate}
\item
If a first-order atom with a variable $x$ is 
reached, and the current assignment gives no interpretation to $x$,
then the play of the game ends. Neither player wins that play of the game.
\item
If a position with a looping atom $C$ is encountered, and there is no
subformula $C\varphi$ in the main formula, then the play of 
the game ends. Neither players wins that play.\footnote{We note that in \cite{turingcomp},
this rule lead to the verifier losing. This alternative convention would
not affect any of the below proofs or results.} 
\end{enumerate}
Such positions are pathological and not really 
needed for the results below. They could be avoided by
defining a suitable notion of a strongly closed formula and then limiting to
such formulae the study below.

We note that $\mathrm{T}$ does not contain $Dx$, but it
makes no difference concerning
the results and proofs below whether or not we include the
operator $Dx$ or not. We 
also note that the results and proofs fo
through as such for the logic $\mathcal{L}^*$ precisely as
defined in \cite{turingcomp}, and whether of not we add $Dx$ to $\mathcal{L}^*$ 
also makes no difference.

The formula from where a semantic game begins is often 
referred to as the \emph{original formula} or \emph{input formula}. The model in
the beginning of a semantic
game is the \emph{original model} or \emph{input model}. 
Sometimes even the terms \emph{original input formula} and \emph{original 
input model} are used.

\section{Characterizations of higher classes}

In this section we provide characterizations for
$k$-\textsc{Exptime} and $k$-\textsc{ExpSpace} 
for all $k\in\mathbb{N}$ by defining 
suitable restrictions for the looping operators $C$ and
model extension operators $Ix$. The extensions 
essentially use a term $t(n)$, depending on
the model domain size $n$, that restricts how
many times the operator instance can be used in a play of the 
semantic game. We note that for these characterizations, it makes no difference 
whether we consider ordered models or not.

Before introducing the related restriction
constructs, we begin by characterizing \textsc{ExpTime} as
the fragment of $\mathrm{T}$ that forbids the use of $Ix$.
Let $\mathrm{T}[-Ix]$ denote the restriction of $\mathrm{T}$ to
the syntax that does not allow the use of the
operator $Ix$ (for any variable $x$). Note that the logic does allow the
operators $I$ that insert tuples into relations; only the model domain 
extension capacity is restricted.

\begin{theorem}\label{exptimetheorem}
$\mathrm{T}[- Ix]$ 
captures \textsc{ExpTime}.
\end{theorem}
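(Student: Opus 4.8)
The plan is to prove the two inclusions $\mathrm{T}[-Ix] \subseteq \textsc{ExpTime}$ and $\textsc{ExpTime} \subseteq \mathrm{T}[-Ix]$ separately, in the standard descriptive-complexity fashion. For the upper bound, fix a sentence $\varphi \in \mathrm{T}[-Ix]$ and an input model $\mathfrak{M}$ with $|M| = n$. The key observation is that, since $Ix$ is forbidden, the domain never grows during a play: it stays of size $n$ throughout. A \emph{position} in the semantic game is therefore a tuple consisting of (i) a subformula occurrence of $\varphi$, (ii) an assignment $g$ from the (finitely many) free variables in play to $M$, and (iii) the current values of all relations and tape predicates over $M$. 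The number of subformula occurrences is a constant $c_\varphi$; the number of assignments is at most $n^{O(1)}$; and the number of possible relation/tape-predicate configurations is $2^{O(n^{a})}$ where $a$ is the maximal arity occurring in $\varphi$. Hence the game arena has at most $2^{O(n^{a})}$ positions and can be constructed explicitly in exponential time. Deciding whether Eloise has a winning strategy is then a reachability game on this arena, solvable in time polynomial in the arena size, i.e.\ in $2^{O(n^a)}$; this places $\mathrm{T}[-Ix]$ in \textsc{ExpTime}. One should also note that whether we include $Dx$ is irrelevant here, since $Dx$ would only shrink the domain and the bound $2^{O(n^a)}$ still holds.

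For the lower bound, I would show that every \textsc{ExpTime} problem on encoded models is definable in $\mathrm{T}[-Ix]$. Let $L$ be decided by a deterministic Turing machine $N$ running in time $2^{n^k}$ on inputs of length $n$. The idea is to build a sentence $\varphi_N$ whose semantic game on $\mathfrak{M}$ simulates, step by step, the run of $N$ on $\mathit{enc}(\mathfrak{M})$. The looping operator $C$ supplies the iteration over the $2^{n^k}$ computation steps, and tape predicates are used to store the machine configuration: we need a predicate for the tape contents indexed by cell positions, a predicate for the head position, and a predicate for the current state. The crucial point is that tape cell positions range over $\{0,1,\dots,2^{n^k}-1\}$, which exceeds the domain size $n$, so these indices must be represented as $k$-tuples over an auxiliary ``address space'' — but in fact it is cleaner to represent positions as tuples of length roughly $n^{k-1}$ (or to first carve out a definable linear order of length $n^k$ via tuples), using the definable successor/order on $M$ coordinatewise to get successor and comparison on addresses. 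Since the input model can be taken to come with a successor relation (as noted in the Preliminaries), no ordered-model assumption is lost. Inside the loop body, one first-order-definable block checks the local transition rule of $N$ at the head, updates the three tape predicates via the $I(X(\dots))$ and $D(X(\dots))$ operators, and then either loops again (via the atom $C$) or, upon reaching an accepting state, terminates in a true first-order atom; a non-accepting halt terminates in a false atom. Correctness is the statement that Eloise wins iff $N$ accepts $\mathit{enc}(\mathfrak{M})$.

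The main obstacle is the lower bound's bookkeeping: encoding $2^{n^k}$-bit addresses as tuples over the $n$-element domain, defining successor and ``is-zero''/``is-max'' predicates on these tuples in $\mathrm{FO}$, and verifying that the loop is actually executed the right number of times — i.e.\ that the simulation does not halt prematurely and that a single pass through the loop body performs exactly one machine step while correctly copying the unchanged part of the tape. Copying the tape is the delicate subpoint: we cannot afford to touch all $2^{n^k}$ cells with explicit operators in one loop iteration, so the tape predicate should be updated only at the (single) head cell, with all other cells left implicitly unchanged; this requires the head-position predicate and the tape predicate to be designed so that ``unchanged elsewhere'' is automatic rather than something to be enforced. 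Once this representation is set up, the transition logic is a routine $\mathrm{FO}$ case analysis, and the absence of $Ix$ is harmless because the simulation never needs new domain elements — all scratch space lives in the tape predicates over the fixed domain $M$. Finally, I would remark that the same construction, read in the other direction, re-confirms the upper bound is tight, so the two inclusions together give the capture result.
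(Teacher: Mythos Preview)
Your upper bound is correct and essentially equivalent to the paper's, just phrased more concretely: you build the exponential-size arena and solve the two-player reachability game on it, while the paper simulates the game with an alternating \textsc{PSpace} machine and invokes $\textsc{APSpace}=\textsc{ExpTime}$. Both work.

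The lower bound, however, has a real gap. You propose to simulate a \emph{deterministic} machine $N$ running in time $2^{n^k}$. Such a machine may also use $2^{n^k}$ tape cells, and you try to address those cells by tuples over the $n$-element domain $M$. But any tape predicate in a fixed sentence $\varphi_N$ has a \emph{fixed} arity $a$, so it can store at most $n^{a}$ bits --- polynomially many, not exponentially many. Your suggestion of ``tuples of length roughly $n^{k-1}$'' is exactly what is forbidden: the arity cannot depend on $n$. Likewise ``a definable linear order of length $n^{k}$ via tuples'' gives only $n^{k}$ addresses, not $2^{n^k}$. There is simply no way to hold an exponential-length tape inside finitely many fixed-arity relations over an unchanged domain of size $n$, so a direct step-by-step simulation of a deterministic \textsc{ExpTime} machine cannot be written in $\mathrm{T}[-Ix]$.

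The paper sidesteps this by again going through $\textsc{APSpace}=\textsc{ExpTime}$: it simulates an \emph{alternating polynomial space} machine. Now the tape has only $p(n)$ cells, which \emph{are} addressable by $(k{+}1)$-tuples over $M$, and the exponential blow-up is absorbed by the alternation, which the game semantics gives for free --- existential states become disjunctions (Eloise moves), universal states become conjunctions (Abelard moves), and the loop $C_{\mathit{loop}}$ drives the steps. That use of alternation on the machine side is the missing idea in your lower-bound argument.
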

\begin{proof}
Fix a formula $\varphi$ of $\textrm{T}[- Ix]$. We
show how to design an \textsc{ExpTime} model checking
procedure for recognizing models of the fixed formula $\varphi$.

Now, suppose $\varphi$ has $m$
tape predicates of arities $k_1, \dots , k_m$.
Then a position in the semantic game is fully
encoded by a tuple of type $(\mathfrak{M},g,\psi,\#)$
specified as follows.
\begin{enumerate}
\item
$\mathfrak{M}$ is the underlying model at the
current stage of the game.
\item
$g$ is an assignment that interprets first order variables in
the domain $M$ of $\mathfrak{M}$. Futhermore, $g$ interprets the 
tape predicates $X_i$ as relations $X \subseteq M^{\mathit{ar}(X)}$.
\item
$\psi$ is the current subformula being played.
\item
$\#\in \{+,-\}$ indicates whether Eloise is
currently the verifying player ($+$) or 
falsifying player ($-$).
\end{enumerate}

For the fixed formula $\varphi$,
the descriptions of positions $(\mathfrak{M},g,\psi,\#)$ in the semantic game are
polynomial in the size of (the description of) the 
original input model.
Indeed, the number of tape predicates $X$ and relation symbols $R$ in
the vocabulary is constant for the fixed 
formula $\varphi$, and thus the sizes of
encodings of the corresponding relations $X\subseteq M^{\mathit{ar}(X)}$
and $R\subseteq M^{\mathit{ar}(R)}$ is
not a problem with regard to polynomiality.
Also, the number of first-order variables that
need to be encoded is constant, and so is the number of 
subformulae of the original input formula.

As we need only polynomial amount of memory to encode and
arbitary position in the semantic game, it is clear 
that we can model the semantic games for $\varphi$ (for all input
models) by an alternating polynomial space Turing
machine $\mathit{TM}_{\varphi}$. Indeed, the machine simply
keeps track of the current position, and positions where Eloise moves 
correspond to existential states while Abelard's positions correspond to
universal states.

As \textsc{APSpace}
equals \textsc{ExpTime}, we have found the required Turing 
machine corresponding to $\varphi$.

Assume then that we have a Turing machine $\mathit{TM}$ running in \textsc{APSpace}.
We will translate the alternating polynomial 
space machine $\mathit{TM}$ to a corresponding 
sentence $\varphi_{\mathit{TM}}$. Let the
space required by the machine $\mathit{TM}$ to be 
bounded from above by the polynomial $p(x)$ of order $k$.
Note that there are $|M|^{k+1}$
tuples of arity $k+1$ in a model $\mathfrak{M}$, so we can 
encode the tape cells required by $\mathit{TM}$ into $(k+1)$-tuples of the model.
However, of course we may have $p(x) > x^{k+1}$ for 
some small enough $x$, but we can deal with the those finitely many
small models by a first-order sentence $\chi$ that accepts 
precisely those small models that $\mathit{TM}$ will, rejecting the
remaining small models.
For greater sizes, we will write a separate sentence.
Thus, without loss of generality, we ignore the 
issue with small models and thereby assume that the running
time of $\mathit{TM}$ is
everywhere bounded from above by $|M|^{k+1}$.

Now, to construct $\varphi_{\mathit{TM}}$, we fix a new tape predicate $S$ which will be 
built into a successor relation over the input model.
We also define a new $(k+1)$-ary predicate $Y_q$ for each state $q$ of $\mathit{TM}$.
The computation is encoded such that
when the read-write head of $\mathit{TM}$ is in the cell $j$ and
the current state is $q$,
then the predicate $Y_{q}$ holds in the tuple $(m_1,\dots , m_{k+1})\in M^{k+1}$
that is lexicographically (with respect to the successor 
relation $S$) the $j$th tuple.
During that computation step, $Y_q$ holds nowhere else, and for
each $q'\not= q$, the predicate $Y_{q'}$ does not hold anywhere.
When modifying such a predicate $Y_q$ to simulate the computation, we can
\begin{enumerate}
\item
use an extra $(k+1)$-ary tape predicate $Y$ to encode where $Y_q$ 
currently holds,
\item
then delete $Y_q$ (i.e., to make it hold nowhere), and
\item
then use $Y$ to modify the predicate $Y_{q_{new}}$ so
that it holds at
the right slot.
\end{enumerate}
Note that the reason we use the 
temporary storage predicate $Y$ is that if $q = q_{\mathit{new}}$, 
the predicate $Y$ will help distinguishing between the old and
the new locations of $Y_q$. There are other ways around this problem,
but using the store predicate $Y$ is a particularly easy solution
suitable for our purposes.

We also define a $(k+1)$-ary predicate $X_A$ for
each (tape or input) symbol $A$ of $\mathit{TM}$.
These are modified to hold in those cells (i.e., $(k+1)$-tuples) where they would hold
during the computation.

In the beginning of computation, after
creating the successor relation $S$, the sentence $\varphi_{\mathit{TM}}$ will
make sure that the binary encoding $\mathit{enc}(\mathfrak{M})$ will be
written into the $(k+1)$-tuples of $M^{k+1}$ using 
the predicates $X_A$. This is easy to do, using
further auxiliary tape predicates $Z_i$ to gain easier control on 
the specification.

The the computation of $\mathit{TM}$ itself is
simulated in the natural way. Consider a
state $q$ and a symbol $A$.
Suppose $\mathit{TM}$ has the
following $m$ allowed transition instructions from
the state $q$ when scanning the symbol $A$:
\begin{align*}
&({q,A})\ \mapsto ({B_1},D_1,{q_1})\\
&\vdots\\ 
&({q,A})\ \mapsto ({B_m},D_m,{q_m})
\end{align*} 
where each $B_i$ denotes a new symbol to be
written to the current cell; $D_i\in\{\mathit{left},\mathit{right}\}$
denotes the direction where $\mathit{TM}$ is to move; and $q_i$
denotes the new state. For each $i$, let $\psi_{(B_i,D_i,q_m)}$
denote the formula stating that Eloise should modify 
the model as follows.
\begin{enumerate}
\item
The tape predicate $X_{B_i}$ should be made to hold in the tuple
which is at the position indicated by the store predicate $Y$ (which
points at the current position of the read-write head). Furthermore,
the tape predicate $X_A$ should be modified so that it does no 
longer hold at the position indicated by $Y$ (unless we have $A = B_i$).
All this amounts to the symbol $A$ being erased from the current cell
and $B_i$ being written to that cell instead.
\item
The tape predicate $Y_q$ should be deleted from the current cell 
and the tape predicate $Y_{q_i}$ should be made to hold in the adjacent
cell which is in the
direction $D_i\in \{\mathit{left},\mathit{right}\}$ from
the current cell.\footnote{Note that all kinds of
fringe effects are straighforward to deal with. For example, it is trivial to
alter the Turing machine so that it never attempts to go left from the 
leftmost cell.}
\item
The predicate $Y$ should be updated to hold precisely at
the position of $Y_{q_i}$ and Eloise should
enter the atom $C_{\mathit{loop}}$. However, if
the new state $q_i$ is a accepting state, then,
instead of $C_{\mathit{loop}}$, we have the atom $\top$
where Eloise can win the play of the semantic game.
Similarly, if $q_i$ is a rejecting state, then we have $\bot$ 
instead of $\top$.
\end{enumerate}
Now, if $q$ is an existential state,
then let $\varphi_{(q,A)}$
denote the formula 
\begin{align*}
\varphi_{q,A}'\, \wedge\, (\psi_{q_1}\vee \dots\ \vee \psi_{q_m})
\end{align*}
where $\varphi_{q,A}'$ states that the current 
cell where $Y_q$ and $Y$ hold has the 
symbol $A$ in it, i.e., also $X_A$ holds in that cell.
On the other hand, if $q$ is a universal state, 
then we let $\varphi_{q,A}$ be the formula
\begin{align*}
\varphi_{q,A}'\, \wedge\, \psi_{q_1}\wedge \dots\ \wedge \psi_{q_m}.
\end{align*}

Now, let $I$ list the state-symbol pairs $(q,A)$ that 
act as inputs to the transition relation that specifies $\mathit{TM}$. Define
then the formula 
\begin{align*}
\varphi_1\ :=\ C_{\mathit{loop}}\, \bigvee\limits_{(q,A)\, \in\, I}\varphi_{q,A}
\end{align*}
where we recall that the formulae $\varphi_{q,A}$ loop (when they loop) via
the looping atom $C_{\mathit{loop}}$.

Let $\alpha_{input}$ be the formula
\begin{align*} & I(X_{A_1}(x_1,\dots , x_{k+1}))C_{enc}\\
\ \vee & \\ \vdots &\\ \ \vee & \\ & I(X_{A_{m'}}(x_1,\dots , x_{k+1}))C_{enc}\\
\vee & \\
& I(Z_1(x_1,\dots , x_{\mathit{ar}(Z_1)}))C_{enc}\\
\ \vee & \\ \vdots &\\ \ \vee & \\ & I(Z_{\ell}(x_1,
\dots , x_{\mathit{ar}(Z_{\ell})}))C_{enc}\\
\end{align*}
which allows the verifier to choose one of the tape 
predicates $X_{A_i},\dots , X_{A_{m'}},
Z_1,\dots , Z_{\ell}$ and add a tuple to it, after which the
game proceeds to the looping atom $C_{enc}$. Here we 
assume that the collection of input
symbols of $\mathrm{TM}$ is $\{A_1,\dots , A_{m'}\}$. The predicates $Z_i$ are the
auxiliary predicates that help in making sure the encoding of
the input model becomes modelled correctly on the successor 
relation over the $(k+1)$-tuples of the input model $\mathfrak{M}$. Note that the successor
relation over the $(k+1)$-tuples is defined lexicographically based on the binary successor relation $S$ over the domain of the input model.
It is a relation of arity $2(k+1)$. The encoding is written to a prefix of
the related successor relation (the cells of computation).

Now, based on $\alpha_{input}$,
define the formula
\begin{align*}
\gamma\ :=\ C_{enc}((\neg\chi_{enc}\wedge \alpha_{input})
\vee (\chi_{enc} \wedge \varphi_1))
\end{align*}
where $\chi_{enc}$ states that the binary encoding of the input
model $\mathfrak{M}$ is encoded in a
prefix of the successor relation over $M^{(k+1)}$ in the correct way.
Recall that $\varphi_1$ is the formula written already above.

Define then, based on $\gamma$, the formula

\begin{align*}
\varphi_{\mathit{TM}}\ :=\ C_{succ}\bigl((\neg\chi_{succ}\wedge\ \alpha(C_{succ}))\
\vee (\chi_{succ} \wedge \gamma)\bigr)
\end{align*}
where the following conditions hold.
\begin{enumerate}
\item
$\alpha(C_{succ})$ requires Eloise to construct the 
auxiliary predicates $S$ and $S'$ so that $S'$ is a linear order and $S$
the corresponding binary successor relation over the input model.
\item
$\chi_{succ}$ is a first-order formula that states that $S'$ is a 
linear order over the model and $S$ the corresponding successor. (Including
the linear order makes it possible to
express in first-order logic that $S$ is indeed a successor.)
\end{enumerate}
Note that the main computational part of the formula is $\varphi_1$. The other 
parts relate to initial constructions creating the
binary input encoding and the successor to enabble that encoding.
\end{proof}

We then define a generalization of the syntax of the logic $\mathrm{T}$.
We begin by defining a collection of suitable terms that
relate to complexity classes with $k$-fold exponential 
limitations on resources.

Let $\mathrm{Pol}$ denote syntactic terms of the
form $$c_m n^m + c_{m-1}n^{m-1}\dots + c_1n + c_0$$ where
\begin{enumerate}
\item
$n$ is variable symbol (distinct from the usual 
logic variable symbols $x$, $y$, etc.).
\item
$c_i\in \mathbb{N}$ are
binary strings denoting natural numbers.
\item
The exponents $m-j$ are, likewise, binary strings denoting natural numbers.
\end{enumerate}
The set $\mathrm{Pol}$ thus contains (all) terms denoting
polynomials in the variable $n$ and with integer coefficients.

Now consider the set of terms of type 
$$\uparrow(k,t)$$
%$$2^{2^{\joo {2^t}}}$$
%
%
%
where $k$ is a binary 
string denoting a number in $\mathbb{N}$ and $t$ a term in $\mathrm{Pol}$.
Now, if $t$ denotes the polynomial $p(n)$,
then $\uparrow(k,t)$ denotes 
$$2^{2^{\joo {2^{p(n)}}}}$$
where the tower (excluding $p(n)$) is of
height $k$, i.e., the number $2$ is
written $k$ times (ignoring the 
possible occurrences of $2$ in $p(n)$).
Let $all$-\textsc{ExpTerm} denote the set of
terms of type 
\[c\, \cdot \uparrow(k,t) + d\]
where $c$, $k$ and $d$ are binary strings denoting 
numbers in $\mathbb{N}$ and $t$ is a term in $\mathrm{Pol}$. 
If $x$ and $y$ denote the natural numbers
encoded by $c$ and $d$ and if $t$
corresponds to $p(n)$, then the term $c\, \cdot \uparrow(k,t) + d$ denotes
\[x\, \cdot 2^{2^{\joo {2^{p(n)}}}} + y.\]
Note that functions in
$$O\Bigl(2^{2^{\joo {2^{p(n)}}}}\Bigr)$$
are naturally bounded by functions that
can be expressed in the form $$x\, \cdot 2^{2^{\joo {2^{p(n)}}}} + y$$
for different values of $x$ and $y$ in $\mathbb{N}$ and
with the input variable being $n$ (the tower is of course
assumed to of the same height in both cases). Thus we are encoding $k$-fold
exponential functions.
We let $k$-\textsc{ExpTerm} denote
the set of terms of type \[c\, \cdot \uparrow(\ell,t) + d\]
where $\ell$ denotes a number in $\{0,\dots , k\}$. In the case $\ell = 0$, 
the term $\uparrow(0,t)$ just outputs the term $t$. We note that 
more general sets of terms could serve our purposes quite well in
the below elaborations, but the related generalizations are not 
difficult to investigate, and the one we use here does the job
well enough from the point of view of the
current work. Generalizations are left for the future.

Suppose that $k\geq 1$. We let $\mathrm{T}[\, Ix\upharpoonright k\-\textsc{Exp}\, ]$
denote the restriction of $\mathrm{T}$ where each
operator $Ix$, $Iy$, an so on, must be written in
the form $Ix(t)$, $Iy(t)$, et cetera, where $t$ is a $k$\textsc{Exp}-term. On the
semantic side, a node $Ix(t)\phi$ in the syntax tree of the
original formula can be visited only $t$ times, i.e., the
number $\ell\in \mathbb{N}$ of
times that the term $t$ refers to. If the semantic game proceeds to that we
visit such a node for one more time, the play of the game then ends there and 
neither of the players win that play.

A natural way to think of this definition is
provided by \emph{clockings}. We add a clock function $c$ to
the semantic game. The function $c$ is similar to the assignment
function $g$ but instead gives some value $p\in \mathbb{N}$ for
each node of type $Ix(t)\psi$. Initially that value is the (binary representation of the)
number that $t$ encodes.
After that, we decrease the value for node $Ix\psi$ each time 
that the node $Ix\psi$ is visited. If we
enter the node when the \emph{clock value} is
already $0$, then the play of the game ends and indeed neither player wins that play.

\begin{theorem}\label{expspacetheorem}
$\mathrm{T}[\, Ix\upharpoonright k\-\textsc{Exp}\, ]$ 
captures $(k+1)$-\textsc{ExpTime}. 
\end{theorem}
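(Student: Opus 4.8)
The plan is to prove the two inclusions of the capture separately, in each case reusing the apparatus of the proof of Theorem~\ref{exptimetheorem}. Write $n=|M|$ for the size of the input model $\mathfrak{M}$. I will use the alternation theorem of Chandra, Kozen and Stockmeyer: for $s(n)\ge\log n$, alternating space $s$ equals deterministic time $2^{O(s)}$, so alternating $k$-fold exponential space --- space $O(\uparrow(k,q(n)))$ for a polynomial $q$ --- equals $(k{+}1)$-\textsc{ExpTime}. I also record that every $k$-fold exponential bound is, after enlarging a constant, the value of a $k$\textsc{Exp}-term $\tau$, i.e.\ of exactly the shape that the syntax of $\mathrm{T}[\,Ix\upharpoonright k\text{-}\textsc{Exp}\,]$ permits.

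First the inclusion of $\mathrm{T}[\,Ix\upharpoonright k\text{-}\textsc{Exp}\,]$ in $(k{+}1)$-\textsc{ExpTime}. Fix a sentence $\varphi$. It has only finitely many clocked nodes $Ix_1(t_1),\dots,Ix_r(t_r)$, and on $\mathfrak{M}$ each $t_i$ evaluates to a number bounded by a $k$-fold exponential function of $n$. Since these are the only operators that enlarge the domain and the node $Ix_i(t_i)$ can contribute at most $t_i$ new points to a play, every model appearing in a play of the semantic game has domain of size $n+O(\uparrow(k,q(n)))$ for a suitable polynomial $q$, i.e.\ $k$-fold exponential in $n$. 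Hence a game position, now a tuple $(\mathfrak{M}',g,\psi,\#,c)$ that also carries the clock function $c$, admits a description of $k$-fold exponential size: the input relations and the tape-predicate interpretations over a $k$-fold exponential domain have $k$-fold exponential encodings, $c$ has a constant number of entries each of at most $k$-fold exponential size, and $g,\psi,\#$ are handled as in Theorem~\ref{exptimetheorem}. We then run the semantic game on an alternating Turing machine that stores the current position, branches existentially at Eloise's positions and universally at Abelard's, and accepts exactly on plays won by Eloise; for safety one may abort any play after $2^{k\text{-fold exp}}$ steps, a counter that still fits the space budget and whose overflow witnesses a repeated position and thus a non-winning play. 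This machine accepts $\mathit{enc}(\mathfrak{M})$ iff $\mathfrak{M}\models\varphi$, and it uses alternating $k$-fold exponential space, so the model class of $\varphi$ is in $(k{+}1)$-\textsc{ExpTime}.

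For the converse, fix a property of finite models decidable in $(k{+}1)$-\textsc{ExpTime}; by the alternation theorem there is an alternating machine $\mathit{TM}$ deciding it in space $N=\tau(n)$ for a $k$\textsc{Exp}-term $\tau$, and we may assume $N$ exceeds the length of $\mathit{enc}(\mathfrak{M})$. Build $\varphi_{\mathit{TM}}$ in four phases. \textbf{(1)}~An unclocked loop $C_{\mathrm{grow}}$ with body $Iz(\tau)\,\vartheta\,C_{\mathrm{grow}}$ and an exit disjunct, where $\vartheta$ extends a successor predicate $S$ by the freshly added point and updates an auxiliary ``final element'' marker; since the clock on $Iz(\tau)$ is $N$, Eloise adds at most $N$ new points. \textbf{(2)}~An unclocked loop in which Eloise populates a linear order $S'$ over the now-fixed domain $D$, followed by a first-order certificate $\chi_{succ}$ that $S'$ is a linear order and $S$ its successor relation, exactly as in Theorem~\ref{exptimetheorem}. \textbf{(3)}~The initialisation of Theorem~\ref{exptimetheorem}, but with the tape cells now the elements of $D$ in $S$-order (rather than $(k{+}1)$-tuples): write $\mathit{enc}(\mathfrak{M})$ onto the first $|\mathit{enc}(\mathfrak{M})|\le N$ cells via the symbol predicates $X_A$ and place $Y_{q_0}$ on the first cell. \textbf{(4)}~The unclocked computation loop $C_{\mathrm{loop}}$ that simulates $\mathit{TM}$ verbatim as in Theorem~\ref{exptimetheorem}: existential states give disjunctions, universal states conjunctions, transitions update the $X_A$ and the $Y_q$ through the auxiliary predicate $Y$, and accepting and rejecting states give $\top$ and $\bot$. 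Phases \textbf{(2)}--\textbf{(4)} use only unclocked loops and relation-modifying operators and never enlarge the domain, so $Iz(\tau)$ is the sole use of clocking, and its budget $N$ suffices: if Eloise fails to provide as many cells as the computation needs --- which is at most $N$ --- the simulated head eventually runs off the available cells and the play ends with no winner, whereas she cannot add more than $N$ points because the clock then expires. Hence her best play grows $D$ to enough cells and then plays the simulation, and she wins iff $\mathit{TM}$ accepts $\mathit{enc}(\mathfrak{M})$, i.e.\ iff $\mathfrak{M}$ has the property.

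I expect Phase~\textbf{(1)} to be the main obstacle: one must arrange the clocked loop so that the $k$\textsc{Exp}-budget on $Iz(\tau)$ at once supplies Eloise with enough fresh domain points to host an $N$-cell tape and forbids her from overshooting that budget, and so that the successor relation is accumulated in a manner that can still be certified correct by a first-order formula over the final, enlarged domain. Once such an ordered model of $k$-fold exponential size has been produced, the remainder of the lower bound is a faithful replay of the proof of Theorem~\ref{exptimetheorem}; on the upper-bound side the only substantive point is the size analysis that keeps positions $k$-fold exponential, so that the semantic game can be decided in alternating $k$-fold exponential space.
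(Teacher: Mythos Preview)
Your proposal is correct and follows essentially the same route as the paper: both directions use the Chandra--Kozen--Stockmeyer equality of alternating $k$-fold exponential space with $(k{+}1)$-\textsc{ExpTime}; the upper bound bounds the size of game positions by a $k$-fold exponential and simulates the game on an alternating machine, and the lower bound grows the domain by a clocked $Iz(\tau)$ loop, builds a successor over the enlarged domain, writes $\mathit{enc}(\mathfrak{M})$ onto an initial segment with unary symbol predicates, and then replays the alternating-machine simulation from Theorem~\ref{exptimetheorem}. The only cosmetic differences are that the paper marks the freshly inserted points with a unary tape predicate $N$ and builds the cell successor over those points (with a separate successor on the original model for the encoding), whereas you fold the successor construction into the growth loop and use the whole enlarged domain as cells; neither choice affects the argument.
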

\begin{proof}
Let $\varphi$ be a formula of $\mathrm{T}[\, Ix\upharpoonright k\-\textsc{Exp}\, ]$.
Let us find the required Turing machine.

Positions of the semantic game are of type $(\mathfrak{M},g,c,\#,\psi)$ 
such that the following conditions hold.
\begin{enumerate}
\item
$\mathfrak{M}$ is model which can be dynamically modified during the game.
\item
$g$ interprets first-order variables and tape predicates in the model.
\item
$c$ gives the clock values for the (nodes of the 
original formula) that have element insertion operators $Ix$, $Iy$, etc.
\item
$\#\in \{+,-\}$ indicates whether Eloise is the verifier ($+$) or
the falsifier ($-$).
\item
$\psi$ is a subformula of the original formula.
\end{enumerate}
Since the use of the element insertion operator is 
limited by a $k$-fold exponential term, the 
size of the (description of the) position $(\mathfrak{M},g,c,\#,\psi)$ is, likewise,
limited to $k$-fold exponential with respect to the description of the
original model. We shall do as in the proof of Theorem
\ref{exptimetheorem}, that is, we use
alternating turing machines.
Namely, we use an alternating 
Turing machine running in alternating $k$-exponential space to simulate the
evaluation game. As alternating $k$-\textsc{ExpSpace}
equals $(k+1)$-\textsc{Exptime}, the translation from the formula $\varphi$ to the
required machine is clear. We simply simulate the game with an alternating 
machine, and the space allowed for the alternating machine suffices.
For the converse translation, suppose we have an 
alternating $k$-\textsc{ExpSpace} machine $\mathit{TM}$.
Let the space required be bounded
above by the $k$-\textsc{ExpSpace} function $f(n)$ corresponding to a $k$-\textsc{ExpTerm}.
Simulating $\mathit{TM}$ by a formula $\varphi_{\mathit{TM}}$ is 
based on the following steps.
\begin{enumerate}
\item
The formula $\varphi_{\mathit{TM}}$ begins with $C\, I(t)x$ where $t$ is a
syntactic counting term for $f(n)$. In fact, we choose from $k$-\textsc{ExpTerm} a
sufficiently large term
term $t$ so that $f$ is in the big $O$ class for $t$, and we can deal with possible minor fringe effects that become realized in our below construction.

The formula $\varphi_{TM}$
enables Eloise to first add new 
points to the model, the number of them bounded by $t$. 
\item
The new points are labelled by a unary tape
predicate $N$ as `new points' not part of the
original domain of the input model.
This labelling is done simultaneously to adding the points.
The formula $\varphi_{\mathit{TM}}$ required is of the form
$$C_{build}\, I(t)x\, I(t)(Ny)\,\bigl( (C_{build} \wedge y=x) \vee \chi)$$
where $\chi$ first builds a successor relation $S$ over the new
elements and then deals with the rest of the computation. (Obviously $\chi$
does not contain looping atoms $C_{build}$, but instead uses other looping atoms.)
\item
Then a prefix (with respect to
the new successsor relation $S$ over the new elements) is labelled by 
the string $\mathit{enc}(\mathfrak{M})$, where $\mathfrak{M}$ is the
original input model. Note that the encoding
requires another successor order $S'$ to be built over $\mathfrak{M}$.
The labelling of the prefix with $\mathit{enc}(\mathfrak{M})$ is
done with unary tape predicates $P_0$ and $P_1$ for the bits $0$ and $1$.
For this step, we use auxiliary predicates (of sufficient arity) that 
scan and connect the original input model $\mathfrak{M}$ to 
the new part with the successor relation $S$. Having all of first-order logic
together with the looping capacity and a supply of auxiliary 
tape predicates in our logic, this is straightforfard to do. A similar
step was already done in the proof of Theorem \ref{exptimetheorem}, but this 
time we have a clearly separate new part of the model
where $\mathit{enc}(\mathfrak{M})$ is encoded. Furthermore, this time the bits of the
binary input can simply be unary predicates, $P_0$ and $P_1$.
\item
After $\mathit{enc}(\mathfrak{M})$ is recorded, the formula 
forces Eloise and Abelard to simulate the computation of $\mathit{TM}$ on
the new part of the model. This is done in the same way as in
the proof of Theorem \ref{exptimetheorem}. The main difference is
that now the tape contents are encoded simply by unary predicates.
\end{enumerate}
The formula $\varphi_{TM}$ thus indeed simulates $\mathit{TM}$.
\end{proof}

We then investigate the restriction of $\mathrm{T}$ to
the case where the use of looping symbols is restricted to $k$\textsc{Exp} terms.
That is, label symbols $C$ must be written in the form $C(t)$ where $t$ is a $k$\textsc{Exp}-term.\footnote{We are not attaching clock terms to looping atoms, just the corresponding label symbols. Thus it is important that in the game, when we transition from a looping atom $C$, we jump to a corresponding position $C(t)\varphi$ with the label $C$ (rather than directly to $\varphi$). Then we reduce the
corresponding clock value by one. The clock value is also lowered when we first come to the node $C(t)\varphi$ (not necessarily from a looping atom).} 
We denote this logic by $\mathrm{T}[$k$\mathrm{Exp}]$. Note that in this logic, the use of constructs $Ix$ is
trivially limited to the $k$\textsc{Exp} case because in fact all looping is limited.

Let $k\geq 1$.
We next observe that $\mathrm{T}[k\mathrm{Exp}]$
captures $k$-\textsc{ExpSpace}.

\begin{theorem}
$\mathrm{T}[$k$\mathrm{Exp}]$
captures $k$-\textsc{ExpSpace}.
\end{theorem}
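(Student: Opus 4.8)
The plan is to follow the same two-directional template used in Theorems~\ref{exptimetheorem} and~\ref{expspacetheorem}, adapting the resource bookkeeping so that the controlled quantity is \emph{space} rather than \emph{time}. For the easy direction, fix a sentence $\varphi$ of $\mathrm{T}[k\mathrm{Exp}]$. Since every label symbol $C$ now carries a $k$\textsc{Exp}-clock, every loop in the formula can be traversed at most a $k$-fold exponential number of times, and in particular the construct $Ix$ (which can only be reached through loops) can add at most a $k$-fold exponential number of fresh domain elements. Hence the model $\mathfrak{M}$ occurring in any reachable position has domain size bounded $k$-exponentially in the size $n$ of the input model, and the full position description $(\mathfrak{M},g,c,\#,\psi)$ — model, assignment on finitely many variables and tape predicates of fixed arity, the vector of clock values, the role bit, and a subformula of $\varphi$ — fits into $k$-fold exponential space. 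Simulating the semantic game with an alternating Turing machine that stores the current position and branches existentially at Eloise's nodes and universally at Abelard's nodes therefore runs in alternating $k$\textsc{ExpSpace}. Since $\textsc{AExpSpace}$ classes collapse: alternating $k$\textsc{ExpSpace} $=$ $(k{+}1)$\textsc{ExpTime}? No — here the relevant identity is that \emph{alternation does not add power over deterministic space above polynomial}, so alternating $k$\textsc{ExpSpace} $=$ (deterministic) $k$\textsc{ExpSpace}; this gives the containment $\mathrm{T}[k\mathrm{Exp}]\subseteq k$\textsc{ExpSpace}.

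For the capturing (hardness) direction, given a deterministic $k$\textsc{ExpSpace} machine $\mathit{TM}$ with space bound $f(n)$ dominated by some $k$\textsc{ExpTerm} $t$, I would build $\varphi_{\mathit{TM}}$ essentially as in the proof of Theorem~\ref{expspacetheorem}: begin with a clocked building loop $C_{build}(t)\, Ix\, I(Ny)(\dots)$ that creates $t$ fresh points marked by a unary tape predicate $N$, then build a successor relation $S$ over the new points using auxiliary tape predicates, then write $\mathit{enc}(\mathfrak{M})$ into a prefix of $S$ (using a second successor $S'$ over $\mathfrak{M}$ and bit-predicates $P_0,P_1$), and finally simulate the computation of $\mathit{TM}$ step by step, encoding tape contents and head position/state by unary predicates over the new points exactly as before. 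The one point requiring attention is the outer \emph{computation} loop: the number of computation steps of a $k$\textsc{ExpSpace} machine can be as large as $(k{+}1)$-fold exponential, which is \emph{not} a $k$\textsc{Exp}-term and therefore cannot be used directly as a clock. The resolution is that for \emph{space}-bounded acceptance we do not need to count steps: we append to the machine's state space a simulation of a binary counter occupying $O(f(n))$ additional tape cells (still $k$-fold exponential, hence still fitting in a prefix of $S$), where the counter counts configurations and the machine halts and rejects if it ever overflows $2^{O(f(n))}$ without accepting. A $k$\textsc{ExpSpace} machine that does not accept within $2^{O(f(n))}$ steps is in a loop and never accepts, so this modification preserves the language; and now every loop in $\varphi_{\mathit{TM}}$ — the building loop, the encoding loop, and the computation loop — can be clocked by a genuine $k$\textsc{Exp}-term, since each is traversed at most a $k$-fold exponential number of times.

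The remaining steps are routine given the earlier proofs: verifying that the first-order ``consistency checker'' subformulae ($\chi_{succ}$-style formulae asserting that $S'$ is a linear order with successor $S$, that $N$ marks exactly the new points, that $\mathit{enc}(\mathfrak{M})$ sits correctly on the prefix, that exactly one $Y_q$-cell holds at each step) are expressible in $\mathrm{FO}$, and checking that Eloise has a winning strategy in the semantic game on $\mathfrak{M}$ if and only if $\mathit{TM}$ accepts $\mathit{enc}(\mathfrak{M})$ — the verifier being forced to report each transition faithfully because any lie is detected by a first-order check that hands the play to the falsifier, and universal states of $\mathit{TM}$ becoming Abelard's choices via conjunctions exactly as in Theorem~\ref{exptimetheorem}. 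The main obstacle, as noted, is purely the bookkeeping mismatch between the space bound we are allowed to clock and the (larger) time bound of a space-bounded computation; once that is absorbed into the machine's state via an explicit step counter laid out on the $k$-exponentially many available cells, both directions go through.
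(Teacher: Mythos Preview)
There is a real gap in both directions, and it comes from the same missed identity. The equivalence you need is $\textsc{A}k\textsc{ExpTime} = k\textsc{ExpSpace}$ (Chandra--Kozen--Stockmeyer), not anything about alternating space.

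In your upper bound, the claim ``alternating $k\textsc{ExpSpace} = k\textsc{ExpSpace}$'' is false: $\textsc{ASpace}(f)=\textsc{DTime}(2^{O(f)})$, so alternating $k\textsc{ExpSpace}$ equals $(k{+}1)\textsc{ExpTime}$ --- precisely the option you wrote down and then talked yourself out of. What you should use instead is that, because every label symbol carries a $k\textsc{Exp}$ clock, every \emph{play} of the semantic game has \emph{length} bounded $k$-exponentially: each of the constantly many labels is entered at most $k$-exponentially often, and between consecutive label visits only formula-size many nodes are traversed. Hence the simulating alternating machine runs in alternating $k$-exponential \emph{time}, and $\textsc{A}k\textsc{ExpTime}=k\textsc{ExpSpace}$ closes the direction. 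This is exactly what the paper does.

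In your lower bound the counter trick does not do what you claim. Laying out an $O(f(n))$-bit step counter on the tape guarantees halting, but the machine still executes up to $2^{O(f(n))}$ steps before the counter overflows --- that is $(k{+}1)$-fold exponentially many steps --- so your computation loop is still traversed $(k{+}1)$-exponentially often and cannot be clocked by any $k\textsc{Exp}$-term. The fix is again the same identity in the other direction: start from an \emph{alternating} $k\textsc{ExpTime}$ machine equivalent to the given $k\textsc{ExpSpace}$ machine and simulate \emph{that}; its run length is genuinely $k$-exponential, so the computation loop (and all the auxiliary building and encoding loops) can be honestly clocked by $k\textsc{Exp}$-terms, and the construction of Theorem~\ref{expspacetheorem} goes through unchanged.
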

\begin{proof}
The proof is almost identical to 
the proof of Theorem \ref{expspacetheorem}.
The principal difference is that this time we
limit the number of loopings (playing time) in
formulae and use the
fact that alternating $k$-\textsc{ExpTime}
equals $k$-\textsc{ExpSpace}.
\end{proof}

In general, it is interesting and worth it to use clocking terms to build
custom-made yet natural logics for capturing complexity classes. Characterizations with
polynomial clocks (terms in $\mathrm{Pol}$) are interesting of course. We 
characterize \textsc{PSpace} by the restriction of $\mathrm{T}[-Ix]$
where the looping construct $C$ is limited similarly to 
the one in $\mathrm{T}[k\textsc{Exp}]$, but this time
with the terms $t$ in $\mathrm{Pol}$.
We call this logic $\mathrm{T}[\textsc{Pol}]$.

\begin{theorem}
$\mathrm{T}[\mathrm{Pol}]$ captures \textsc{PSpace}.
\end{theorem}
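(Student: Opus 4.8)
The plan is to mimic the proof of Theorem~\ref{expspacetheorem} but with the roles of time and space interchanged, exploiting the equality $\textsc{APTime} = \textsc{PSpace}$ in place of the $k$-fold exponential version. We are now working inside $\mathrm{T}[-Ix]$, so the model domain never grows: every position in the semantic game is of the form $(\mathfrak{M},g,c,\#,\psi)$ where $\mathfrak{M}$ has the fixed original domain $M$, the assignment $g$ interprets finitely many first-order variables and finitely many tape predicates of fixed arities over $M$, and the clock function $c$ assigns to each node $C(t)\psi$ of the (fixed) original formula a value bounded by the polynomial $t$ evaluated at $n=|M|$. As in Theorem~\ref{exptimetheorem}, the description of $\mathfrak{M}$, of $g$, and of $\#,\psi$ is polynomial in the size of the input; the new ingredient, the clock values, are polynomially bounded numbers, hence have description of size $O(\log n)$ each, and there are only constantly many of them. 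So a position is described in polynomial space, and we simulate the semantic game by an alternating polynomial-\emph{time} Turing machine: Eloise's positions become existential states, Abelard's become universal states, and crucially the clocking forces every branch of the game to halt after polynomially many steps — each traversal of a loop decrements some $C(t)$-clock, and there are constantly many such clocks each bounded by a polynomial, so the number of configurations actually visited along any play is polynomial. Thus $\mathit{TM}_\varphi$ runs in alternating polynomial time, and $\textsc{APTime} = \textsc{PSpace}$ gives the first inclusion.

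For the converse, given an alternating polynomial-time machine $\mathit{TM}$ (equivalently, a $\textsc{PSpace}$ machine) whose running time is bounded by a polynomial $p(n)$ of degree $k$, I would reuse the encoding from the proof of Theorem~\ref{exptimetheorem} essentially verbatim: build a successor relation $S$ (and a linear order $S'$ for first-order definability of successor) via $\alpha(C_{succ})$, write $\mathit{enc}(\mathfrak{M})$ into a prefix of the $(k+1)$-tuples via $\alpha_{input}$ and $C_{enc}$, encode tape cells as $(k+1)$-tuples and head position/state by predicates $Y_q,X_A$, and simulate transitions by the same formulae $\psi_{(B_i,D_i,q_i)}$ and $\varphi_{q,A}$. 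The only change is the main loop $\varphi_1$: instead of $C_{\mathit{loop}}\bigvee_{(q,A)\in I}\varphi_{q,A}$ we write $C_{\mathit{loop}}(t)\bigvee_{(q,A)\in I}\varphi_{q,A}$ where $t\in\mathrm{Pol}$ is a polynomial term with $p(n) \in O(t(n))$, so that the loop can be traversed at most $p(n)$ times — exactly the time bound of $\mathit{TM}$ — and likewise the initial auxiliary loops $C_{succ},C_{enc}$ (which only need linearly or polynomially many rounds to scan the domain and write the encoding) get polynomial clock terms attached. Since $\mathit{TM}$ halts within $p(n)$ steps on every branch, a play that exhausts the clock corresponds to a computation that would not have halted, so clocking it off is harmless; and since $\mathit{TM}$ runs in polynomial \emph{time} it uses only polynomial \emph{space}, so the cells it touches fit into the $(k+1)$-tuples exactly as before. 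Small models with $p(n) > n^{k+1}$ are handled by a separate first-order sentence $\chi$ as in Theorem~\ref{exptimetheorem}.

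The main thing to get right — and the only genuinely new obstacle relative to the earlier proofs — is the bookkeeping that the clocks impose a bound matching alternating polynomial \emph{time} rather than space. In Theorem~\ref{exptimetheorem} the absence of $Ix$ already forced \emph{space} to be polynomial; here we additionally need every \emph{branch} of the semantic game to be short, and this is precisely what attaching $\mathrm{Pol}$-terms to every label symbol buys us. One has to check that the auxiliary phases (constructing $S,S'$, writing $\mathit{enc}(\mathfrak{M})$) can themselves be carried out within polynomially many loop iterations — they can, since each is a single bounded scan over the domain or over the $(k+1)$-tuples — and that no looping atom is left unclocked (the construction uses only $C_{succ},C_{enc},C_{\mathit{loop}}$, plus whatever constantly many auxiliary loops appear inside the transition simulation, each of which runs for polynomially many rounds and so can be given a $\mathrm{Pol}$ clock). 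With these observations the two directions close and $\mathrm{T}[\mathrm{Pol}]$ captures $\textsc{PSpace}$.
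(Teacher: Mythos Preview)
Your proposal is correct and follows essentially the same route as the paper: both directions rest on $\textsc{APTime}=\textsc{PSpace}$, with the upper bound coming from the observation that positions are polynomial-size (no $Ix$) and plays are polynomially long (polynomial clocks on every label), and the lower bound reusing the construction of Theorem~\ref{exptimetheorem} for an alternating polynomial-time machine with polynomial clock terms attached to $C_{\mathit{succ}}$, $C_{\mathit{enc}}$, and $C_{\mathit{loop}}$. Your write-up is considerably more detailed than the paper's two-line sketch, but the underlying argument is the same.
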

\begin{proof}
The proof is almost identical to
the proof of Theorem \ref{exptimetheorem}.
This time we use the fact that
alternating polynomial time equals $\textsc{PSpace}$.
Indeed, positions in the semantic game require a polynomial 
amount of memory, and now also playing time is limited polynomially. 
Thus we can simulate semantic games with an alternating polynomial time machine.

Also the other direction is similar. The limitation of the label symbols is not a
problem when simulating an alternating machine, as the machine is now limited in
running time by a polynomial.
\end{proof}

We note that we obtain a characterization of \textsc{Elementary} for
free. The logic $\mathrm{T}[\mathit{all}\mathrm{Exp}]$ does the job.
(The logic is as $\mathrm{T}[k\mathrm{Exp}]$ but allows all
terms from $\mathit{all}$-\textsc{ExpTerm} as opposed to
only the ones in $k$-\textsc{ExpTerm}.) All kinds of clockings are of
future interest. These include term families pointing to
logarithmic (and lower) functions as well as functions that
grow extremely fast.

We also note that the above proofs do not need tape predicates, with
the exception of the characterizations of \textsc{PSpace} 
and \textsc{ExpTime}. This is because we can use different 
kinds of gadgets to encode
new relation symbols. However, for this to hold, the underlying vocabulary
must include at least one binary (or higher-arity) relation. 
Gadgets using only at most unary relation symbols do not suffice.

\subsection{Extensions}

We will then consider some extensions of the logic $T$. In this section,
when discussing the logic $T$, the reader may also consider
the extension of $T$ with all the deletion operators. The results here are
not sensitive to whether we restrict to $T$ itself of consider some of
its very close variants.

To define extensions of the logic $T$, consider first the
formulae of $T$ itself. It is easy to see that formulae with
the recursion
construct $C$ can be unraveled by replacing an atom $C$ by
the corresponding reference formula $\varphi$. The reference
formula $\varphi$ is the formula in $C\varphi$ where we use $C$ as a
naming symbol to name $\varphi$. 
Notice that here we assume that each atom $C$ has a unique reference
formula. This can be assumed without loss of generality. 
Now, we
can do the unraveling to all atoms $C$ repeatedly, ultimately 
ending up with an infinitely deep formula equivalent to
the original formula we started with.

Thus formulae of $T$ can be seen as finitary encodings of
infinitarily deep formulas. Let us define a related
infinitary language $T_{\infty}$. For this purpose,
consider first the syntax trees of formulas of $T$.
Let $V$ denote the operators that can occur in
non-leaf positions of the syntax trees of 
formulae of $T$. Thus for example $\neg$ and $\vee$ 
belong to $V$, as do $I x$ and $\forall x$, and so on. 
Similarly, let $U$ denote the labels of 
leaf positions. Thus for example $Rxy$ and $C$ are in $U$.
Now, let $V_0\subseteq V$ and $U_0\subseteq U$ be the 
restrictions of $V$ and $U$ with no looping symbols $C$,
meaning that we remove all such symbols $C_1$, $C_2$, and so on. 
The formulas of $T_{\infty}$ can now be defined as follows.

\begin{enumerate}
\item
The formulae are possibly infinite 
trees $t$ with maximum branch length $\omega$.  Like a
syntax tree, the tree is directed and has a
unique root node. 
\item
The non-leaf nodes are labeled with 
operators from $V_0$. 
\item
The leaf nodes are labeled
with atoms from $U_0$. 
\item
Nodes labeled with $\wedge$ or $\vee$ have
two children, and other nodes with a
label from $V_0$ have one child. 
\end{enumerate}

The point is that formulae of $T_{\infty}$ are like those of $T$
without looping constructs, but can have infinitely deep branches. 
The game-theoretic semantics for $T$ extends to this logic $T_{\infty}$
directly.

\begin{proposition}
Formulae of $T$ translate to formulae of $T_{\infty}$. 
\end{proposition}

\begin{proof}
The unraveling translation turns formulae in $T$ to 
ones in $T_{\infty}$. If there are atoms $C$ without 
reference formulae, these can also be dealt with using suitable
gadgets. Under the semantics where a non-referring looping atom is
considered a tie (in the game), we can create an infinite
branch with repeated, say, negations. Under the semantics
where it is a loss to one of the players, we can replace it
with a suitable formula $\top$ or $\bot$, depending on
the negations above the position. 
\end{proof}

Now, back to finitary logics, 
notice that the logic $T$ can of course be extended by a classical 
negation(s). Let $\sim$ denote the classical negation and 
extend the game-theoretic semantics of $T$ by defining 
the semantics for $\sim$ in the way described next.

The game trees are as before, the only addition being the novel 
positions where the formula begins with $\sim$. Consider (cf. \cite{turingcomp}) a 
position 
\[(\mathfrak{B},g,\#,\, {\sim\varphi}),\]
where $\mathfrak{B}$ is the current model, $g$
the assignment, $\#\in \{+,-\}$ an indicator giving
the current verifier, and $\sim\varphi$ the current formula. 
Then the next position in the game is
\[(\mathfrak{B},g,\#,\varphi).\]

This means that during a play of the evaluation game, we
simply remove $\sim$ and continue to the next position. This does
not yet determine the semantics of the extended logic. We shall 
define the semantics next, and this will
cover also the semantics of $T_{\infty}(\sim)$, that is, the
extension of $T_{\infty}$ with the possibilty of 
using $\sim$ in non-leaf positions.

Consider a formula $\varphi$, a model $\mathfrak{M}$, an
assignment $g$ and the induced game-tree beginning
from the root position $(\mathfrak{M}, g, + , \varphi)$. 
Intuitively, the positions with a formula $\sim\psi$ are at this stage 
irrelevant since nothing crucial happens in them. Thus we are
essentially considering a game for $T$ or $T_{\infty}$. 
Now, in the game-tree, label each position in the 
tree by $win(\exists)$ is Eloise has a winning strategy in
the subtree starting from that position. Similarly, 
label each position by $win(\forall)$ if Abelard has a 
winning strategy from there. Note that the game is a
reachability game for both players, so positional
strategies suffice to cover what can be
done with general strategies.

Now, as the next step, consider each node with a main 
connective $\sim$
such that the node does not have any ancestor node with 
the main connective $\sim$, that is,
consider those nodes with $\sim$ that can be reached from the
root without going through any earlier node with $\sim$. 
The nodes to be considered can be
called \emph{commencing} $\sim$-nodes. In the game tree,
replace each subtree whose root is a commencing $\sim$-node
according to the following rules. 
\begin{itemize}
\item
Suppose the node position is $(\mathfrak{M}',f,+,\sim \psi)$.
\begin{enumerate}
\item
If the node is labeled $win(\exists)$, then replace
the subtree beginning with that
node (including the node itself) by a leaf node 
with a position $(\mathfrak{M}',f,+,\bot)$.
This means that Eloise immediately loses in
such a node. 
\item
If the node is not labeled $win(\exists)$, then replace
the subtree beginning with that
node (including the node itself) by a leaf node 
with a position $(\mathfrak{M}',f,+,\top)$.
This means that Eloise immediately wins in
such a node. 
\end{enumerate}
\end{itemize}

In the same scenario, we can define an alternative
classical negation $\dot{\sim}$ as
follows.

\begin{itemize}
\item
Suppose the node position is $(\mathfrak{M}',f,+,\dot{\sim} \psi)$.
\begin{enumerate}
\item
If the node is labeled $win(\forall)$, then replace
the subtree beginning with that
node (including the node itself) by a leaf node 
with a position $(\mathfrak{M}',f,+,\top)$.
This means that Eloise immediately wins in
such a node. 
\item
Otherwise replace
the subtree beginning with that
node (including the node itself) by a leaf node 
with a position $(\mathfrak{M}',f,+,\bot)$.
This means that Eloise immediately loses in
such a node. 
\end{enumerate}
\end{itemize}

The first classical negation has the reading ``is not true''
and the second one ``is false.'' (Alternatively, we can
read these as ``is not verifiable'' and ``is
falsifiable.'') Note that we do not consider here 
game-trees where $\neg$ can occur before $\sim$ or $\dot{\sim}$. As
particularly interesting cases, we mention the 
game-theoretically fully dual ones.

To define truth (or a related 
concept), we write $\mathfrak{M}\models^+ \varphi$ if
Eloise has a winning strategy in the game involving $\varphi$. If
the formula involves $\sim$ or $\dot{\sim}$, then the 
game is of course the two-stage game where we first consider
all strategies, then label nodes, and then consider playing in
the new, modified game-tree. We also write $\models^+ \varphi$ if we
have $\mathfrak{M}\models^+ \varphi$ for all models $\mathfrak{M}$.

Now, consider the sentences $C\neg C$ and $C \sim C$ and
also $C \dot{\sim} C$. 
These give the liar paradox, or formalizations of it. Let us consider, in
particular, $C \sim C$. Now, let us indeed give this the above
semantics. We first observe that neither $\exists$ nor $\forall$ has a
winning strategy in the game. Thus, according to the above
semantics, the commencing $\sim$-position does
not become labeled with either $win(\exists)$ or $win(\forall)$.
Therefore we have $\models^+ C\sim C$. Note also
that we have $\mathfrak{M}\not\models^+ C\, \dot{\sim}\, C$
for all models $\mathfrak{M}$.

Now, consider $C\neg C$ again. This formula is
indeterminate, neither player has a winning strategy. 
This means that we do not get a well-founded truth
for $C\neg C$. Here, a well-founded truth means Eloise
winning the standard evaluation game for $\varphi$. This is
the notion of truth (or verifiability) in $T$ (cf.
\cite{gamesandcomputation}).
Under that notion, truth means that a 
well-founded procedure exists 
for reducing the truth of $\varphi$ to truths of
first-order atomic formulas.\footnote{We remark
that in finite models, even winning strategies are 
finite in $T$ (due to K\"{o}nig's lemma).} A
winning strategy gives a well-founded subtree of
the full game-tree where every path leads in a finite
number of steps to an obvious atomic truth. This subtree
can be seen as a proof of $\varphi$ in the relevant model. In
general, this gives a natural pre-theoretic concept for 
truth: it needs to be based on a well-founded, reductionist 
and finite (or somehow finitary) process that ends with
atomic facts whose truth value is self-evident. The key is 
\emph{reduction} to 
atomic literals with obvious truth values. This could be
described as a coinductive reduction process.

Concerning general intuitions about truth, we often 
have such a well-founded procedure in mind. In the 
liar sentence $C \neg C$ and truth teller $C C$, the
related procedure fails (cf. \cite{gamesandcomputation}). The
attempt to find the firm ground by working towards
atomic truths fails, and instead the process seems to 
run infinitely long. Thus, for a suggested resolution of
the paradox, we can consider the following.

Firstly, let $LS$ 
denote the liar sentence, which we do not have to fix
syntactically or semantically here. Nevertheless, $C\neg C$ gives one
possibility, but of course not the only one. 
Similarly, let $TT$ denote the truth teller.

Now, as a potential strategy for explaining the 
paradox, we first accept that neither $LS$ nor $TT$ has a
well-founded truth value (or a truth value based on a
well-founded process) in the reductionist
sense described above. We equate such well-founded
truth values with the standard, desired and
unquestioned truth. We could characterize it as
first-level truth.

As the next step, we may require that some
truth value must be found for $LS$ (or $TT$ or both). 
This steps seems questionable, as it rests upon
requiring bivalence. However, if we in any case
wish to force a truth value for $LS$ or $TT$, we 
can do so without any problems. This is because what we are
after is not forcing a first-level truth. Instead, we
seek a second-level one. There is, of course, a conceptual similarity 
here with Russell's type theoretic hierarchies. Note
that this move does not
depend on the fact that we chose our first-level truth to be
the well-founded truth. We could choose different notions of
truth as our first-level truths, or even simply not point out
any. The important issue is to declare that whatever the first-level
truth is, we are now doing something beyond that. Indeed, it is an
important point that while our first-level truth here relates to
well-foundedness, we could do the argument without specifying what
first-level truth is.

This approach clarifies things. We are not forcing the same,
first-level truth notions on $LS$ and $TT$, we are choosing 
higher-level ones. They need not be comparable with the 
first-level ones. Furthermore, we can choose them in any 
arbitrary way without being inconsistent. This is not to say
that all ways are equally natural, but they are consistent.

Which values to choose then? This can be done arbitrarily, as  
the second-level truth does not interfere with the first-level one. 
This indeed does not mean that all choices are equally natural in 
every possible way, but it 
does mean that no choice is inconsistent. A short path to
the same conclusion goes as follows. Firstly, $LS$ seems to give an
infinitely flipping sequence of (first-level) truths, true-false-true,
and so on. Therefore the (second-level) truth-value of $LS$ is ``infinitely
flipping (first-level) truth values.''

To summarize, the problem was to expect a first-level reductionist
truth value. That process did not stop. The next step was to try to 
force a truth or falsity with the direct reading of the sentence. This
seemed to lead to a flipping truth value. The next step was to force a
second-level truth value, which ever one. This is analogous to adding
imaginary numbers to the reals. In fact, paraconsistent truth values are
also easy to accept with a similar abstraction, considering them just 
new abstract entities with a new kind of an interpretation that 
expands the old paradigm.

Now, back to the formal semantics. Notice that there we also 
found different truth values for the paradoxical
sentence ``$C \text{ negation } C$'',
one for the case where $\sim$ is the negation and another one for $\dot{\sim}$. 
However, a perhaps even more natural option would simply be to give it
the new truth value ``infinitely flipping'' which does not 
try to get associated with true or false too directly, and 
which also describes what happens with the 
attempt to get reductionist first-level truth values.

Finally, the logic $T$ obtains a natural compositional 
semantics as a corollary of the game-theoretic one. However, the
semantics has an interesting issue. For the first-order
connectives, the semantics goes as follows.

\[
\begin{array}{ll}
\mathfrak{A},g \models^+ \varphi \wedge \psi &
\ \Leftrightarrow\ \ \
\mathfrak{A},g \models^+ \varphi \text{ and } 
\mathfrak{A},g \models^+ \psi\\
\mathfrak{A},g \models^+ \varphi \vee \psi &
\ \Leftrightarrow\ \ \
\mathfrak{A},g \models^+ \varphi \text{ or } 
\mathfrak{A},g \models^+ \psi\\
\mathfrak{A},g \models^+ \neg \varphi &
\ \Leftrightarrow\ \ \
\mathfrak{A},g \models^- \varphi\\
\mathfrak{A},g \models^+ \exists x\varphi &
\ \Leftrightarrow\ \ \
\mathfrak{A},g[a/x] \models^+ \varphi\text{ for some }a\in A\\
\mathfrak{A},g \models^+ \forall x\varphi &
\ \Leftrightarrow\ \ \
\mathfrak{A},g[a/x] \models^+ \varphi\text{ for all }a\in A\\
\mathfrak{A},g \models^- \varphi \wedge \psi &
\ \Leftrightarrow\ \ \
\mathfrak{A},g \models^- \varphi \text{ or } 
\mathfrak{A},g \models^- \psi\\
\mathfrak{A},g \models^- \varphi \vee \psi &
\ \Leftrightarrow\ \ \
\mathfrak{A},g \models^- \varphi \text{ and } 
\mathfrak{A},g \models^- \psi\\
\mathfrak{A},g \models^- \neg \varphi &
\ \Leftrightarrow\ \ \
\mathfrak{A},g \models^+ \varphi\\
\mathfrak{A},g \models^- \exists x\varphi &
\ \Leftrightarrow\ \ \
\mathfrak{A},g[a/x] \models^- \varphi\text{ for all }a\in A\\
\mathfrak{A},g \models^- \forall x\varphi &
\ \Leftrightarrow\ \ \
\mathfrak{A},g[a/x] \models^- \varphi\text{ for some }a\in A\\
\end{array}
\]

The clauses for first-order atomic formulae are as
usual. This semantic system extends to $T$ easily. First note that we
have the following.

\[
\begin{array}{ll}
\mathfrak{A},g \models^+ Ix \varphi &
\ \Leftrightarrow\ \ \
(\mathfrak{A} + a),g[a/x] \models^+ \varphi\\
\mathfrak{A},g \models^+ I(Rx_1...x_k) \varphi &
\ \Leftrightarrow\ \ \
\mathfrak{A},g[(R \cup (a_1,..., a_k)) /R][(a_1,... , a_k) / 
(x_1,... , x_k)] \models^+ \varphi \\
& \text{ } \hspace{1.4cm} \text{ for some }a_1,\dots a_k \in A\\
\mathfrak{A},g \models^- Ix \varphi &
\ \Leftrightarrow\ \ \
(\mathfrak{A} + a),g[a/x] \models^- \varphi\\
\mathfrak{A},g \models^- I(Rx_1...x_k) \varphi &
\ \Leftrightarrow\ \ \
\mathfrak{A},g[(R \cup (a_1,..., a_k)) /R][(a_1,... , a_k) / 
(x_1,... , x_k)] \models^- \varphi \\
& \text{ } \hspace{1.4cm} \text{ for all }a_1,\dots a_k \in A
\end{array}
\]

\noindent 
where $\mathfrak{A} + a$ means $\mathfrak{A}$ 
expanded with a fresh isolated element $a$. 
The clauses for the deletion operators are similar and easily understood, so we
skip them. The clauses for $C$ are as follows.

\[
\begin{array}{ll}
\mathfrak{A},g \models^+ C \varphi &
\ \Leftrightarrow\ \ \
\mathfrak{A},g \models^+ \varphi\\
\mathfrak{A},g \models^+ C &
\ \Leftrightarrow\ \ \
\mathfrak{A},g \models^+ C \varphi\\
\mathfrak{A},g \models^- C \varphi &
\ \Leftrightarrow\ \ \
\mathfrak{A},g \models^- \varphi\\
\mathfrak{A},g \models^- C &
\ \Leftrightarrow\ \ \
\mathfrak{A},g \models^- C \varphi \\
\end{array}
\]
where we assume $C$ has a unique reference formula.
If not, we need to take into account many reference 
formulas, which is also easy to formulate.

We note that these clauses (the compositional semantics) is, in 
some sense, a corollary of the game-theoretic semantics. Also, we note that we
will not get a clear evaluation of (for example) the sentence $C C$ by 
using this compositional semantics. The game-theoretic semantics does tell
that the sentence is indeterminate, but the compositional clauses we 
deduced here do not say anything. They simply keep 
referring to each other. Nevertheless, the above compositional semantic
equivalences are true. The possible circularity is not really an issue, and of course 
circularity does not imply inconsistency anyway, just that the cicrular 
point is underdetermined.

\bibliographystyle{plain}
\bibliography{mybib}

%\bibliography{tur}

\end{document}